\documentclass[11pt,reqno]{amsart}
\usepackage{amsmath,amssymb,amsthm}
\usepackage[hidelinks]{hyperref}
\newtheorem{thm}{Theorem}[section]

\theoremstyle{definition}

\newtheorem{definition}[thm]{Definition}

\newtheorem{lemma}[thm]{Lemma}

\newtheorem{theorem}[thm]{Theorem}

\numberwithin{equation}{section}

\usepackage{enumitem}

\def\N{\mathbb{N}}

\title{On the generalized Sierpi\'nski and Riesel numbers}
\author{Paulius Virbalas}
\address{Institute of Mathematics, Faculty of Mathematics and Informatics, Vilnius
University, Naugarduko 24, Vilnius LT-03225, Lithuania}
\email{paulius.virbalas@mif.vu.lt}

\subjclass{11A07, 11B83} \keywords{Brier numbers, covering systems, primitive prime divisors, Riesel numbers, Sierpi\'nski numbers.}

\begin{document}

\begin{abstract}
In this paper, we prove that for any fixed base $b \ge 2$, there exists an infinite arithmetic progression of positive integers $k$ that are simultaneously generalized Sierpi\'nski and generalized Riesel numbers in base $b$. More precisely, we construct such an arithmetic progression so that both $k\cdot b^n + 1$ and $k\cdot b^n - 1$ have at least two distinct prime factors for every positive integer $n$. Our approach uses covering systems, following the ideas of Erd\H{o}s and later constructions of Harrington. The argument is then completed using Zsigmondy's theorem on primitive divisors together with properties of multiplicative orders.
\end{abstract}
\maketitle

\section{Introduction}\label{intro}

In 1849, de Polignac suggested that every positive odd integer greater than $3$ can be expressed as a sum of an odd prime and a power of $2$. The counterexample $127$ was soon found by de Polignac himself (in fact, the counterexample $959$ was known to Euler many years earlier) \cite{b16}. Nevertheless, the question of whether there are infinitely many counterexamples was answered only in 1950 by Erd\H{o}s \cite{erd50}. In particular, by introducing a novel approach based on covering systems, Erd\H{o}s demonstrated that there exist infinitely many odd positive integers $k$ for which $|k-2^n|$ is composite for all positive integers $n$. From this it follows that any such $k$ cannot be written in the form $2^n+p$ for an odd prime $p$.\par
Recall that a covering system is a finite
collection of arithmetic progressions (expressed as congruences) whose union covers the set of all integers. For example, the collection of five congruences 
\begin{equation}\label{cs}
\left\{ a_i \pmod{m_i} \right\}_{i=1}^{5}
= \left\{
\begin{array}{l}
0 \pmod{2},\quad 0 \pmod{3}, \quad 1 \pmod{4}, \\
5 \pmod{6},\quad 7 \pmod{12}
\end{array}\right\}
\end{equation}
is a covering system, because it can be checked that every integer is congruent to $a_i \pmod{m_i}$ for at least one value of $i \in \{1, \ldots,5\}$. Since their introduction, covering systems have been the subject of extensive investigation in their own right (see, e.g., \cite{c1, c3, c2}). Combined with Zsigmondy's theorem on primitive divisors, covering systems have become the key tool in proving that numbers of a certain form can never be prime. Classical (and perhaps most well-known) examples from this line of research are centered around Riesel numbers and Sierpi\'nski numbers. In 1956, Riesel \cite{riesel56} showed that there exist infinitely many odd positive integers $k$ so that $k\cdot 2^n-1$ is composite for all positive integers $n$ (such $k$ became known as Riesel numbers). Similarly, in 1960, Sierpi\'nski \cite{sierp60} showed that there exist infinitely many odd positive integers $k$ so that $k\cdot 2^n+1$ is composite for all positive integers $n$ (such $k$ became known as Sierpi\'nski numbers). Since the original findings of Riesel and Sierpi\'nski, considerable interest has been devoted to investigating the existence of Riesel and Sierpi\'nski numbers within various integer sequences (see, e.g. \cite{sq1, sq2, sq4, sq3}). It is also worth noting that, despite numerous efforts and extensive computer calculations, determining the smallest Riesel and Sierpi\'nski numbers remains an open problem. As of 2026, the smallest known Riesel and Sierpi\'nski numbers are $509203$ and $78557$, respectively \cite{sr26}.\par
Several directions have been explored in generalizing the notion of Sierpi\'nski and Riesel numbers (see, e.g., \cite{gs3, gs1, gs2}). In this paper, we focus on the generalized Sierpi\'nski and Riesel numbers in base $b$, where $b$ is any integer satisfying $b\geq 2$. Following the work of Evans, Harrington, Heiney, and Wieczorek \cite{nn}, we use the following definitions. As usual, for positive integers $x$ and $y$, by $\operatorname{gcd}(x,y)$ we denote the greatest common divisor of $x$ and $y$.
\begin{definition}[Sierpi\'nski number in base $b$]\label{sierpn}
For a fixed integer $b\geq 2$, a positive integer $k$ is called a $b$-Sierpi\'nski number if
$\operatorname{gcd}(k+1,b-1)=1$ and $k\cdot b^n+1$ is composite for all positive integers $n$.
\end{definition}
\begin{definition}[Riesel number in base $b$]\label{riesn}
For a fixed integer $b\geq 2$, a positive integer $k$ is called a $b$-Riesel number if
$\operatorname{gcd}(k-1,b-1)=1$ and $k\cdot b^n-1$ is composite for all positive integers $n$.
\end{definition}
The requirement that $\operatorname{gcd}(k+\beta,b-1)=1$, with $\beta=1$ in the Sierpi\'nski case and $\beta=-1$ in the Riesel case, is imposed to exclude the trivial situation in which every term of $k\cdot b^n+\beta$ has a common divisor greater than $1$. We note that the notions of $2$-Sierpi\'nski and $2$-Riesel numbers defined above are not equivalent to the classical notions of Sierpi\'nski and Riesel numbers, respectively, since the former definitions do not require $k$ to be odd. For an alternative definition of generalized Sierpi\'nski numbers in base $b\geq2$, together with a discussion of the role of the oddness condition in the classical definition, we refer the reader to \cite{br09} (see also \cite{doc09}).\par
Brier was the first to study numbers that are simultaneously Sierpi\'nski and Riesel numbers. According to Filaseta, Finch, and Kozek \cite{fil08}, in an unpublished paper from 1998 Brier showed that there exist infinitely many such numbers. These numbers subsequently became known as Brier numbers and were studied further by other authors (see, e.g., \cite{cg07, fil08}). In view of Definitions~\ref{sierpn} and \ref{riesn}, the following definition is natural (and has already been used in \cite{nn}).
\begin{definition}[Brier number in base $b$]
For a fixed integer $b\geq 2$, a positive integer $k$ is called a $b$-Brier number if it is simultaneously a $b$-Sierpi\'nski number and a $b$-Riesel number.
\end{definition}
Recently, generalized Sierpi\'nski and Riesel numbers in Cullen-type sequences were studied in \cite{nn}. It follows from the results therein that there exist infinitely many $b$-Brier numbers for every base $b$ such that $b+1$ is not a power of $2$. In this paper, we prove the following result.
\begin{theorem}\label{maint}
For every fixed integer $b\geq 2$, there exists an infinite arithmetic progression of $b$-Brier numbers.
\end{theorem}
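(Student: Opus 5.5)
The plan is to build two covering systems, one for the ``$+1$'' family and one for the ``$-1$'' family. Each congruence class will be attached to its own prime, and these primes will come from Zsigmondy's theorem. The Chinese Remainder Theorem then glues all the resulting conditions on $k$ into a single residue class modulo some $M$. That residue class, restricted to large $k$, is the required arithmetic progression.

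\textbf{Step 1: two coverings with disjoint modulus sets.} Fix $b\ge 2$. I would choose two covering systems $\{a_i \pmod{m_i}\}_{i=1}^{s}$ and $\{c_j \pmod{n_j}\}_{j=1}^{t}$ with the following properties.
\begin{enumerate}[label=(\roman*)]
\item Within each system the moduli are pairwise distinct.
\item The two sets of moduli are disjoint.
\item Every modulus is at least $3$ and different from $6$.
\end{enumerate}
Property (iii) excludes all Zsigmondy exceptions. Such coverings can be taken from the literature on distinct-moduli coverings with large minimum modulus, for instance coverings whose moduli divide a highly composite number like $2^a3^b5^c7^d$. To get disjointness, take one covering and a second one whose moduli all carry a prime factor (or a prime power) absent from the first. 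By Zsigmondy's theorem, for each modulus $m\ge 3$ with $m\ne 6$ there is a primitive prime divisor $p$ of $b^m-1$. Then $p\nmid b$, $p\nmid b-1$, and $\operatorname{ord}_p(b)=m$ exactly. Because all moduli across both systems are distinct, the chosen primes $p_1,\dots,p_s$ and $q_1,\dots,q_t$ are pairwise distinct.

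\textbf{Step 2: the congruences on $k$.} The conditions are as follows.
\begin{itemize}
\item For each $i$, impose $k\equiv -b^{-a_i}\pmod{p_i}$. Then for every $n\equiv a_i \pmod{m_i}$ we have $k b^n\equiv -b^{n-a_i}\equiv -1\pmod{p_i}$, so $p_i\mid kb^n+1$.
\item For each $j$, impose $k\equiv b^{-c_j}\pmod{q_j}$. Then $q_j\mid kb^n-1$ whenever $n\equiv c_j\pmod{n_j}$.
\item Additionally impose $k\equiv 0\pmod{r}$ for every prime $r\mid b-1$. This gives $\gcd(k\pm1,b-1)=1$, as Definitions~\ref{sierpn} and~\ref{riesn} require.
\end{itemize}
The primes dividing $b-1$ are distinct from all the $p_i$ and $q_j$, since the latter have order at least $3$. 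So the CRT produces a single class $k\equiv k_0\pmod{M}$, where $M=\operatorname{rad}(b-1)\prod p_i\prod q_j$.

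\textbf{Step 3: compositeness.} Since both systems are coverings, every $n\ge1$ lies in some class $a_i \pmod{m_i}$ and in some class $c_j\pmod{n_j}$. Hence $kb^n+1$ has a prime divisor among the $p_i$, and $kb^n-1$ has one among the $q_j$. Taking $k$ larger than every $p_i$ and $q_j$ gives $kb^n\pm1>\max(p_i,q_j)$, so these numbers are composite. All $k\equiv k_0\pmod M$ with $k>\max(p_i,q_j)+1$ therefore form an infinite arithmetic progression of $b$-Brier numbers.

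If the stronger statement from the abstract (two distinct prime factors) is wanted, I would exclude prime powers. One way is to add one extra auxiliary prime condition that forces a second fixed prime to divide each term along a further covering. A simpler alternative is to use the multiplicative order of $b$ modulo $p_i^2$ to rule out $kb^n+1=p_i^e$.

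\textbf{Main obstacle.} The delicate step is Step 1: producing explicit coverings with distinct moduli that avoid $\{1,2,6\}$ and whose modulus sets are disjoint. This has to work uniformly in $b$, because $b$ governs which Zsigmondy exceptions occur: $b=2,m=6$, and $m=2$ when $b+1$ is a power of $2$, which is exactly the case left open by \cite{nn}. The first thing I would try is to fix a single known covering with distinct moduli all $\ge 3$, none equal to $6$. For the second covering I would take a scaled copy whose moduli are the first moduli times a fresh prime $\ell$. Each class $a\pmod m$ is replaced by the $\ell$ classes $a+jm\pmod{\ell m}$, $j=0,\dots,\ell-1$, which repeats moduli. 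To handle the repetition, I would pick for modulus $\ell m$ several distinct primes, namely primitive divisors of $b^{\ell m}-1$ together with primitive divisors of $b^{d}-1$ for divisors $d\mid \ell m$ with $d\nmid m$. If that bookkeeping fails, I would fall back on citing a pair of explicit distinct-moduli coverings.
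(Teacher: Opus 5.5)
Your Steps 2 and 3 are correct. Settling for compositeness is also a legitimate simplification of the paper's route: Lemma~\ref{ord} and the auxiliary primes $p_i^*,q_j^*$ are needed there only for the stronger claim that $kb^n\pm1$ has two distinct prime factors. Your $p_i^2$ remark would not give that claim on its own, since it says nothing about the $n$ for which a high power of $p_i$ divides $kb^n+1$; that is exactly where the paper uses $p_i^*$. The gap is Step~1, which carries all the difficulty and which you assert rather than establish. You need two coverings, each with distinct moduli, with disjoint modulus sets and all moduli at least $3$. This is a genuinely nontrivial input. The paper takes it from Harrington's construction \cite{harr15} of a distinct-moduli covering $\mathcal{C}_1\cup\mathcal{C}_2\cup\mathcal{C}_3$ that covers every integer three times. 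It uses $\mathcal{C}_2$ and $\mathcal{C}_3$, because $\mathcal{C}_1$ contains the modulus $2$.

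Neither mechanism you sketch for the second covering works.

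\emph{Fresh prime.} Suppose every modulus of a covering is divisible by a fixed $\ell\ge2$ (your fresh prime or prime power). Each class $a\pmod{\ell m}$ lies inside $a\pmod{\ell}$, so substituting $n=r+\ell x$ splits the covering into $\ell$ coverings in $x$, one for each $r$. These have pairwise disjoint modulus sets, and at most one of them is trivial (from the modulus $\ell$ itself). For $\ell\ge3$ you therefore already need two distinct-moduli coverings with disjoint modulus sets, which is essentially the object you are trying to build. For $\ell=2$, freshness forces the first covering to use only odd moduli, which is the open Erd\H{o}s--Selfridge problem.

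\emph{Scaled copy.} This is just the first covering with each modulus $\ell m$ used $\ell$ times. Take a prime modulus $m$ and $\ell\ge3$. The $\ell$ classes modulo $\ell m$ need $\ell$ distinct unused primes whose order divides $\ell m$. Primes of order $1$ clash with the coprimality condition, and the guaranteed order-$m$ prime is taken by the first covering. Uniformly in $b$, Zsigmondy then guarantees only one prime of order $\ell$ (which all moduli $\ell m'$ compete for) and one of order $\ell m$.

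\emph{The case $b=2$.} Condition (iii) is stricter than needed and fails for the available source: Harrington's $\mathcal{C}_2$, as used in the paper, contains the modulus $6$. So your plan leaves $b=2$ open. The remedy is the paper's. Since $2$ is a modulus of neither system, the prime $3$ (with $\operatorname{ord}_3(2)=2\mid 6$) is unused. For the class $a\pmod 6$ you can impose $k\cdot 2^{a}\equiv-1\pmod 3$; this suffices for compositeness, and the paper works modulo $3^{2+N}$ only for its two-prime-factor refinement.

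With Harrington's pair and this patch, your Steps 2--3 give a complete and shorter proof of the theorem.
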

We prove Theorem~\ref{maint} by constructing an infinite arithmetic progression of positive integers $k$ satisfying the required coprimality conditions and such that both $k\cdot b^n+1$ and $k\cdot b^n-1$ have at least two distinct prime factors for every positive integer $n$. The necessary auxiliary results are collected in Section~\ref{aux}, while the proof of Theorem~\ref{maint} is given in Section~\ref{prf}.


\section{\textbf{Auxiliary results}}\label{aux}

A key ingredient in the proof of Theorem~\ref{maint} is the paper \cite{harr15} of Harrington, in which he constructs a covering system $\mathcal{C}$ such that all moduli in $\mathcal{C}$ are distinct and every integer satisfies at least three congruences in $\mathcal{C}$. In particular, $\mathcal{C}$ is the union of three covering systems $\mathcal{C}_1$, $\mathcal{C}_2$, and $\mathcal{C}_3$, depending on four suitably chosen distinct primes $\pi_1,\pi_2,\pi_3,\pi_4$. For our purposes, it is important to note that for any choice of $\pi_1,\pi_2,\pi_3,\pi_4$, the system $\mathcal{C}_1$ contains the modulus $2$, and hence all moduli occurring in $\mathcal{C}_2$ and $\mathcal{C}_3$ are greater than $2$. For the remainder of the paper, we fix Harrington's choice
\[
\pi_1=11,\qquad \pi_2=13,\qquad \pi_3=17,\qquad \pi_4=19,
\]
and for these primes we set
\begin{equation*}
\mathcal{S}_1:=\mathcal{C}_2,\qquad \mathcal{S}_2:=\mathcal{C}_3.
\end{equation*}
From the construction described in Section~4 of \cite{harr15}, we obtain
\[\mathcal{S}_1=\{a_i \pmod{m_i} \}_{i=1}^{C}=\left\{
\begin{array}{l}
1 \pmod{3},\quad 5 \pmod{6}, \quad 3 \pmod{9}, \\
2 \pmod{12}, \quad \ldots,\quad\quad \quad a_C \pmod{m_C}
\end{array}
\right\},\]
where the moduli satisfy
\[m_1<m_2<\ldots<m_C,\quad m_1=3, ~m_2=6,\]
and
\[\mathcal{S}_2=\{c_j \pmod{n_j} \}_{j=1}^{D}=\left\{
\begin{array}{l}
1 \pmod{5},\quad 1 \pmod{7}, \quad 7 \pmod{10}, \\
9 \pmod{14}, \quad \ldots,\quad\quad \quad c_D \pmod{n_D}
\end{array}
\right\},\]
where the moduli satisfy
\[n_1<n_2<\ldots<n_D,\quad n_1=5, ~n_2=7.\]
The exact numbers of moduli are not stated in the original source. According to our calculations, $C=277$ and $D=96314$, though these exact values will not be relevant to our proof. We will use only the facts that $\mathcal{S}_1$ and $\mathcal{S}_2$ are covering systems, that all their moduli are greater than $2$, that the moduli within each system are distinct, and that the two systems have no modulus in common.\par
The following auxiliary results are from elementary number theory.
\begin{lemma}[Zsigmondy's theorem, \cite{zsg}]\label{zsig}
Let $x > y$ be coprime positive integers. Then for any integer $m > 1$, there exists a prime number $p$ (called a primitive prime divisor) such that $x^m-y^m$ is divisible by $p$, but $x^k-y^k$ is not divisible by $p$
for all integers $1 \leq k < m$,  
except in the following cases:
\begin{enumerate}[label=\textup{(\roman*)}]
    \item $x = 2, y = 1, m = 6$, in which case $2^6 - 1 = 63 = 7 \cdot 9$, and neither $3$ nor $7$ is a primitive divisor;
    \item $m = 2$ and $x + y$ is a power of 2.
\end{enumerate}
\end{lemma}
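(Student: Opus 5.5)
The statement is the classical Zsigmondy theorem, and the plan is the standard cyclotomic argument. Let $\Phi_m(x,y)=\prod_{\zeta}(x-\zeta y)$ be the homogenized $m$-th cyclotomic polynomial, the product running over the primitive $m$-th roots of unity. Then $x^m-y^m=\prod_{d\mid m}\Phi_d(x,y)$, and $\Phi_m(x,y)$ is a positive integer. Since $\gcd(x,y)=1$, no prime dividing $x^m-y^m$ divides $y$. For such a prime $p$, let $e=\operatorname{ord}_p(xy^{-1})$. Then $p\mid x^k-y^k$ exactly when $e\mid k$. Hence $p$ is a primitive divisor of $x^m-y^m$ if and only if $e=m$, and every such $p$ divides $\Phi_m(x,y)$. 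It therefore suffices to show that $\Phi_m(x,y)$ has a prime factor $p$ with $e=m$.

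\textbf{Step 1 (non-primitive primes in $\Phi_m$).} Suppose $p\mid\Phi_m(x,y)$ but $e<m$. We show that $m=e\,p^{a}$ with $a\ge1$, so that $p\mid m$. Because $e\mid p-1$, this forces $p$ to be the largest prime factor $P(m)$ of $m$, and at most one such prime exists. The proof uses lifting the exponent: write $m=e\,t$ with $t>1$. The quotient $(x^m-y^m)/(x^e-y^e)$ is divisible by $p$ exactly when $p\mid t$. It is divisible by $p$ only to the first power, except in the case $p=2$, $m=2$. Combined with the factorization into cyclotomic pieces, this gives $v_p(\Phi_m(x,y))\le1$ for $m\ge3$. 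Consequently, if no primitive divisor exists, then
\[
\Phi_m(x,y)\in\{1,\,P(m)\},\qquad\text{so}\qquad \Phi_m(x,y)\le P(m)\le m\quad (m\ge 3).
\]
For $m=2$ we have $\Phi_2=x+y$, and any odd prime factor of $x+y$ is primitive. So the only failure at $m=2$ is when $x+y$ is a power of $2$, which is exception (ii).

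\textbf{Step 2 (lower bound).} For each primitive $\zeta$ we have $|x-\zeta y|\ge x-y$, hence $\Phi_m(x,y)\ge (x-y)^{\varphi(m)}$. This alone is too weak when $x-y=1$, so we use
\[
|x-\zeta y|^2=(x-y)^2+2xy(1-\cos\theta)
\]
together with the fact that the roots $\zeta=e^{i\theta}$ are spread around the circle. Pairing $\zeta$ with $\bar\zeta$ and dropping the roots close to $1$ (there are only a few of them), we aim for a bound of the shape $\Phi_m(x,y)>c\,x^{\varphi(m)/2}$, or the cruder $\Phi_m(x,y)\ge \bigl(\tfrac{x+y}{2}\bigr)^{\varphi(m)/2}$ away from small cases. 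Since $x\ge2$, this exceeds $m$ as soon as $\varphi(m)$ is moderately large. We compare with $P(m)\le m$ and use the elementary bound $\varphi(m)\ge\sqrt{m/2}$.

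\textbf{Step 3 (small cases; the main obstacle).} The comparison in Step 2 leaves a finite list of pairs $(m,x,y)$ with $x-y=1$ and small $x$. Essentially only $x=2$, $y=1$ and small $m$ survive, and these are the cases where $\Phi_m(x,y)\le P(m)$ is not excluded by the estimate. The delicate part is making the lower bound explicit enough to cut the list down. We check the survivors directly via $\Phi_m(2,1)$:
\[
\Phi_3=7,\quad \Phi_4=5,\quad \Phi_5=31,\quad \Phi_6=3=P(6),\quad \Phi_{10}=11,\quad \Phi_{12}=13,\ \ldots
\]
Every value is a prime not equal to $P(m)$, except $\Phi_6(2,1)=3$. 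This isolates exception (i), $2^6-1=63$. The same check with $(x,y)=(3,2)$ and similar small pairs confirms that no further exceptions occur.
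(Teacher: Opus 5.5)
The paper does not prove this lemma; it only quotes it from Zsigmondy, so your outline has to stand on its own. Step 1 is the right reduction, but one sentence in it is false as written. If $m=ep^a$, the quotient $(x^m-y^m)/(x^e-y^e)$ is divisible by $p^a$, so the claim that it contains $p$ only to the first power fails once $a\ge 2$. The quotient whose $p$-adic valuation is exactly $1$ (for $p$ odd, or $p=2$ with $m\ge 4$) is $(x^m-y^m)/(x^{m/p}-y^{m/p})$. $\Phi_m(x,y)$ divides it because $m/p$ is a proper divisor of $m$, and that is what gives $\nu_p(\Phi_m(x,y))\le 1$. Similarly, your criterion only yields $p\mid t$. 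To get $t=p^a$, you must rerun it with $e$ replaced by $ep^a$, where $p^a$ is the exact power of $p$ dividing $t$.

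The genuine gap is Step 2. This is where the theorem is actually decided, and you state it only as a target ("we aim for"). The mechanism you describe does not work as stated. A fixed arc around $1$ contains roughly a proportional share of the $\varphi(m)$ primitive roots, not "a few". So discarding them requires a count of reduced residues modulo $m$ in intervals, which you never supply. Without explicit constants, the finite list in Step 3 is not determined. Even granting your cruder bound, comparing $((x+y)/2)^{\varphi(m)/2}$ with $m$ via $\varphi(m)\ge\sqrt{m/2}$ leaves every $m$ up to a few thousand when $(x,y)=(2,1)$, not just small $m$. The other pairs are never specified. Also, your claim that the listed values are all prime already fails at $\Phi_{11}(2,1)=2047=23\cdot 89$.

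The missing idea is the exact identity $|x-\zeta y|^2=(x-y)^2+xy\,|1-\zeta|^2$, which needs no discarding. Taking the product over primitive $\zeta$ gives, for $m\ge 2$, $\Phi_m(x,y)>(xy)^{\varphi(m)/2}\,\Phi_m(1,1)$, where $\Phi_m(1,1)=p$ if $m=p^a$ and $1$ otherwise. This exceeds $1$, so for $m\ge 3$ a failure forces $\Phi_m(x,y)=p=P(m)$ with $m=ep^a$. Then:
\begin{enumerate}[label=\textup{(\alph*)}]
\item If $p\ge 7$, using $\varphi(m)\ge p-1$ and $xy\ge 2$ gives $\Phi_m(x,y)>2^{(p-1)/2}\ge p$, a contradiction.
\item If $m$ is a prime power, the factor $\Phi_m(1,1)=p$ gives $\Phi_m(x,y)>p$ directly.
\item Since $e\mid p-1$, the only remaining moduli are $m=2\cdot 3^a$, $2\cdot 5^a$ and $4\cdot 5^a$. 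The bound suffices for all of these except $m=6$ with $xy=2$, which is exception (i), and $m=10$ with $xy=2$, where $\Phi_{10}(2,1)=11$.
\end{enumerate}
The classical alternative is to write $\Phi_{ep^a}(x,y)=\Phi_e(x^{p^a},y^{p^a})/\Phi_e(x^{p^{a-1}},y^{p^{a-1}})$ and use $(X-Y)^{\varphi(e)}\le\Phi_e(X,Y)\le(X+Y)^{\varphi(e)}$.
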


Let $x$ and $y$ be coprime positive integers with $y\geq2$. Recall that the order of $x$ modulo $y$ is the least positive integer $s$ such that $x^s \equiv 1 \pmod y$. We will use the notation $\operatorname{ord}_y(x)=s$.

\begin{lemma}\label{zsiga}
Let $b\geq3$ be a fixed integer. Then for any integer $m \geq 3$, there exists a prime number $p$ such that $m$ is the order of $b$ modulo $p$, i.e., $\operatorname{ord}_p(b)=m$.
\end{lemma}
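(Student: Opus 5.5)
Apply Lemma~\ref{zsig} with $x=b$ and $y=1$. These are coprime positive integers with $x>y$ since $b\geq 3$. Fix $m\geq 3$; we first check that neither exceptional case occurs. Case (i) requires $x=2$, which is excluded because $b\geq3$. Case (ii) requires $m=2$, which is excluded because $m\geq 3$. So Lemma~\ref{zsig} gives a prime $p$ with $p\mid b^m-1$ and $p\nmid b^k-1$ for all $1\le k<m$.

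Next we show that $\operatorname{ord}_p(b)=m$. Since $p\mid b^m-1$, the prime $p$ cannot divide $b$, so $b$ and $p$ are coprime and the order $s=\operatorname{ord}_p(b)$ is defined. The congruence $b^m\equiv1\pmod p$ gives $s\le m$. If $s<m$, then $p\mid b^s-1$ with $1\le s<m$, contradicting primitivity. Hence $s=m$.

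There is no real obstacle here. The only point needing care is verifying that the hypotheses $b\ge3$ and $m\ge3$ rule out both exceptional cases of Zsigmondy's theorem, which is done above.
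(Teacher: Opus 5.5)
Your proof is correct and follows the paper's argument exactly: apply Zsigmondy's theorem with $x=b$, $y=1$, rule out both exceptional cases using $b\geq 3$ and $m\geq 3$, and conclude $\operatorname{ord}_p(b)=m$ from primitivity. You also spell out the final step (that $p\nmid b$, and that an order $s<m$ would contradict primitivity), which the paper treats as immediate from the definition of a primitive prime divisor.
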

\begin{proof}
We invoke Zsigmondy's theorem (Lemma~\ref{zsig}) with $x=b$ and $y=1$. Since $b\geq 3$ and $m\geq3$, the exceptional cases of Zsigmondy's theorem do not apply. Hence, $b^m-1$ has a primitive prime divisor for every $m \geq3$. The claim of the lemma follows from the definition of a primitive prime
divisor.
\end{proof}

For a positive integer $x$ and a prime number $p$, let $\nu_p(x)$ denote the $p$-adic valuation of $x$. That is, $\nu_p(x)$ equals the exponent of the highest power of $p$ that divides $x$. The following lemma extends Lemma~2 of \cite{m1} from base $2$ to an arbitrary base $b\geq2$ and from a single lifting step to all higher powers of $p$.

\begin{lemma}\label{ord}
Let $b\geq 2$ and $u \geq 1$ be fixed integers, and $p$ an odd prime. Suppose that $\operatorname{ord}_{p^{u}}(b)=m$, and that $\nu_p(b^m-1)=u$. Then for any non-negative integer $r$  both of the following hold:
\begin{enumerate}[label=\textup{(\roman*)}]
\item $\operatorname{ord}_{p^{u+r}}(b)=m\cdot p^r$;
\item $\nu_p(b^{m\cdot p^{r}}-1)=u+r$. 
\end{enumerate}
\end{lemma}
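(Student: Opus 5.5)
I will argue by induction on $r$ and prove (i) and (ii) together. For $r=0$, part (i) is the hypothesis $\operatorname{ord}_{p^u}(b)=m$ and part (ii) is the hypothesis $\nu_p(b^m-1)=u$. Now assume both statements hold for some $r\ge 0$, and write $M=m p^r$. Part (ii) for $r$ lets us write
\[
b^{M}=1+p^{u+r}t,\qquad p\nmid t .
\]

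\textbf{Lifting step for (ii).} Expand $b^{Mp}=(1+p^{u+r}t)^p$ by the binomial theorem:
\[
b^{Mp}-1=p^{u+r+1}t+\binom{p}{2}p^{2(u+r)}t^2+\sum_{j\ge3}\binom{p}{j}p^{j(u+r)}t^j .
\]
Since $p$ is odd, $p\mid\binom p2$, so the second term is divisible by $p^{2(u+r)+1}$. Every term with $3\le j\le p-1$ is divisible by $p^{3(u+r)+1}$, and the last term $p^{p(u+r)}t^p$ has $p(u+r)\ge 3(u+r)$. All these exponents exceed $u+r+1$, because $u+r\ge1$. Hence $\nu_p(b^{Mp}-1)=u+r+1$, which is (ii) for $r+1$. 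The oddness of $p$ is exactly what stops the $j=2$ term from interfering.

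\textbf{Lifting step for (i).} Let $d=\operatorname{ord}_{p^{u+r+1}}(b)$. From (ii) for $r+1$ we have $b^{Mp}\equiv 1 \pmod{p^{u+r+1}}$, so $d\mid Mp=mp^{r+1}$. Also $b^d\equiv1\pmod{p^{u+r}}$, so the induction hypothesis (i) gives $M=mp^r\mid d$. Therefore $d\in\{mp^r,\,mp^{r+1}\}$. The case $d=mp^r$ is impossible: it would give $p^{u+r+1}\mid b^{M}-1$, contradicting $\nu_p(b^M-1)=u+r$. So $d=mp^{r+1}$, which is (i) for $r+1$.

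The only step needing care is the valuation estimate in the binomial expansion. Everything else is the standard fact that the order divides any exponent annihilating $b$.
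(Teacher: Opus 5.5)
Your proof is correct and follows the paper's argument: induction on $r$, the binomial expansion of $(1+p^{u+r}t)^p$ for (ii), and trapping the order between $mp^r$ and $mp^{r+1}$ for (i). The differences are minor. You spell out the valuation estimate, which is where the oddness of $p$ enters and which the paper simply calls clear. You also get $mp^r\mid d$ from inductive hypothesis (i), whereas the paper uses only $m\mid w$ together with $w\nmid mp^{r'}$; this is a cosmetic difference.
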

\begin{proof}
We use mathematical induction on $r$. The base case $r=0$ is true by assumption. Assume that (i) and (ii) hold for some non-negative integer $r=r'$. Then
\[b^{m\cdot p^{r'}}-1=p^{u+r'} \cdot l_0\]
with $l_0$ not divisible by $p$. We need to show that (i) and (ii) hold for $r=r'+1$. By applying the binomial expansion, we get
\begin{equation*}
\begin{split}
b^{m\cdot p^{r'+1}}&=(b^{m\cdot p^{r'}})^{p}\\
&=(1+p^{u+r'}\cdot l_0)^{p}\\
&=1+p^{u+r'+1}\cdot l_0+\binom{p}{2}\cdot (p^{u+r'}\cdot l_0)^2+\ldots+\binom{p}{p}\cdot (p^{u+r'}\cdot l_0)^{p}.
\end{split}
\end{equation*}
Therefore
\begin{equation}\label{lm}
b^{m\cdot p^{r'+1}}-1=p^{u+(r'+1)}\cdot l_1
\end{equation}
with $l_1$ clearly not divisible by $p$. Hence 
\[\nu_p(b^{m\cdot p^{r'+1}}-1)=u+(r'+1),\]
which completes the proof of (ii).\par
Let $w$ denote the order of $b$ modulo $p^{u+(r'+1)}$. Since by assumption, $m$ is the order of $b$ modulo $p^u$, it follows that $w=mk$ for some positive integer $k$. Also, by induction hypothesis we have that $\nu_p(b^{m\cdot p^{r'}}-1)=u+r'$. This implies that $b^{m\cdot p^{r'}}-1$ is not divisible by $p^{u+(r'+1)}$. Thus, $w=mk$ does not divide $m\cdot p^{r'}$. On the other hand, from \eqref{lm} we derive that $w=mk$ divides $m\cdot p^{r'+1}$. Such divisibility relations are possible if and only if $k=p^{r'+1}$, that is, $w=m\cdot p^{r'+1}$. This completes the proof of (i). 
\end{proof}

\section{\textbf{Proof of Theorem~\ref{maint}}}\label{prf}
The proof is based on techniques similar to those developed by Chen \cite{m4,m3,m1} and, more recently, by Zhao \cite{m5}.
\begin{proof} 
Let $b\geq 3$ be a fixed integer (the case $b=2$ will be treated separately at the end). Consider the two covering systems 
\[
\mathcal{S}_1=\{a_i \pmod {m_i}\}_{i=1}^{C}
\quad\text{and}\quad
\mathcal{S}_2=\{c_j \pmod{n_j} \}_{j=1}^{D}
\]
that were described in the first part of Section~\ref{aux}. By applying Lemma~\ref{zsiga},
we associate to each $m_i$ a prime $p_i$, and to each $n_j$ a prime $q_j$, such that
\[
m_i = \operatorname{ord}_{p_i}(b)
\quad\text{and}\quad
n_j = \operatorname{ord}_{q_j}(b).
\]
Since all the moduli in $\mathcal{S}_1$ and $\mathcal{S}_2$ are greater than $2$, and $b\geq 3$, neither of the exceptional cases in Lemma~\ref{zsig} occurs. Moreover, since the moduli within each covering system are distinct and the two systems have no modulus in common, all the numbers
\begin{equation}\label{listmn}
m_1, \ldots, m_{C}, n_1, \ldots, n_{D}
\end{equation}
are distinct. It follows that all the corresponding primes
\[p_1, \ldots, p_{C}, q_1, \ldots, q_{D}\]
are distinct as well. Set 
\[B:=\operatorname{max}\{m_1, \ldots, m_{C}, n_1, \ldots, n_{D}\}\]
and choose $N \in \N$  such that $2^N>B$. It is then clear that
\[p_i^N>B\quad\quad \text{and} \quad \quad q_j^N>B,\]
where $i=1, \ldots, C$ and $j=1, \ldots, D$.
By applying Lemma~\ref{zsiga} again, we associate to each $p_i^N$ a prime $p_i^{*}$, and to each $q_j^N$ a prime $q_j^{*}$, such that 
\begin{equation}\label{hord}
p_i^N = \operatorname{ord}_{p_i^{*}}(b)\quad\text{and}\quad q_j^{N} = \operatorname{ord}_{q_j^{*}}(b).
\end{equation}
The choice of $N$ guarantees that all primes 
\begin{equation}\label{primeslist}
p_1, \ldots, p_{C}, q_1, \ldots, q_{D}, p_1^{*}, \ldots, p_{C}^{*}, q_1^{*}, \ldots, q_{D}^{*}
\end{equation}
are distinct. Set
\begin{equation}\label{padic}
u_i:=\nu_{p_i}(b^{m_i}-1)\quad\quad \text{and} \quad \quad v_j:=\nu_{q_j}(b^{n_j}-1).
\end{equation}
Thus $b^{m_i}-1$  is divisible by $p_i^{u_i}$ but not by $p_i^{u_i+1}$, and analogously, $b^{n_j}-1$ is divisible by $q_j^{v_j}$ but not by $q_j^{v_j+1}$. Moreover, from the defining properties of the primes $p_i,q_j$ and the definitions of $u_i,v_j$, we have
\begin{equation}\label{pord}
\operatorname{ord}_{p_i^{u_i}}(b)=m_i\quad\text{and}\quad
\operatorname{ord}_{q_j^{v_j}}(b)=n_j.
\end{equation}
Also, choose $M$ such that 
\[b^M > \operatorname{max}\{p_i^{u_i+N}\cdot q_j^{v_j+N}~:~~i=1,\ldots, C~~\text{and}~~j=1,\ldots, D\}.\] 
Finally, by the Chinese remainder theorem, take any positive integer $k$ with $k > b^M$ satisfying all of the following:
\begin{equation}\label{modp}
k\cdot b^{a_i}\equiv -1 \pmod {p_i^{u_i+N} \cdot p_i^*},\quad i=1,\ldots, C;  
\end{equation}
\begin{equation}\label{modq}
k\cdot b^{c_j}\equiv 1 \pmod {q_j^{v_j+N}\cdot q_j^*},\quad j=1,\ldots, D;  
\end{equation}
\begin{equation}\label{modb}
k\equiv 1+b^M \pmod {b^{M+1}}. 
\end{equation}
We will demonstrate that both $k\cdot b^n+1$ and $k\cdot b^n-1$ have at least two distinct prime factors for every positive integer $n$. Fix any $n \in \N$. Since each of $\mathcal{S}_1$ and $\mathcal{S}_2$ is a covering system, the integer $n$ simultaneously satisfies
\[n \equiv a_i \pmod {m_i}\quad\quad\text{and}\quad\quad n \equiv c_j \pmod {n_j} \]
for some indices $i$ and $j$. Thus, there exist non-negative integers $s$ and $t$ such that 
\[n=m_i\cdot s + a_i \quad\quad\text{and}\quad\quad n=n_j\cdot t + c_j. \]
Consequently
\[k\cdot b^n +1  \equiv k\cdot b^{a_i} \cdot (b^{m_i})^s+1 \equiv k\cdot b^{a_i}\cdot 1 +1 \equiv -1+1 \equiv 0 \pmod {p_i}\]
and
\[k\cdot b^n -1  \equiv k\cdot b^{c_j} \cdot (b^{n_j})^t-1 \equiv k\cdot b^{c_j}\cdot1 - 1 \equiv 1-1 \equiv  0 \pmod {q_j}. \]
Hence
\[k\cdot b^n +1=p_i^{\alpha_i}\cdot P\quad\quad\text{and}\quad\quad k\cdot b^n -1=q_j^{\gamma_j}\cdot Q\]
for some positive integers $\alpha_i, \gamma_j, P, Q$, where $P$ is not divisible by $p_i$, and $Q$ is not divisible by $q_j$. To complete the proof, it is sufficient to demonstrate that $P$ is divisible by some prime $p \neq p_i$, and $Q$ is divisible by some prime $q \neq q_j$.\par
First, we will show that the claim is true for $P$. From
\eqref{modb} it follows that
\begin{equation}\label{sierp}
p_i^{\alpha_i}\cdot P =k\cdot b^n +1 \geq (1+b^M)\cdot b^1+1 > b^M > p_i^{u_i+N}.
\end{equation}
Hence, if $\alpha_i < u_i+N$, then clearly $P>1$ and we are done. In contrast, suppose that $\alpha_i \geq u_i+N$. It means that $k\cdot b^n + 1$ is divisible by $p_i^{u_i+N}$. This, together with \eqref{modp}, forces 
\[k\cdot b^n + 1 = k\cdot b^{a_i} \cdot (b^{m_i})^s+1 \equiv -1\cdot b^{m_i\cdot s} +1  \equiv 0 \pmod {p_i^{u_i+N}},\]
that is
\[b^{m_i\cdot s} \equiv 1 \pmod {p_i^{u_i+N}}. \]
By \eqref{padic} and \eqref{pord}, the assumptions of Lemma~\ref{ord} are satisfied. Hence,
$m_i\cdot p_i^N$ is the order of $b$ modulo $p_i^{u_i+N}$. Thus, $m_i\cdot s$ is divisible by $m_i\cdot p_i^N$, and hence $s$ is divisible by $p_i^N$. Write $s=p_i^N \cdot s'$ for some non-negative integer $s'$. Recall from \eqref{hord} that $p_i^N$ is the order of $b$ modulo $p_i^{*}$. Therefore 
\begin{equation}\label{P2}
b^{m_i\cdot s} \equiv (b^{p_i^N})^{m_i\cdot s'} \equiv 1^{m_i\cdot s'} \equiv 1 \pmod {p_i^{*}}.
\end{equation}
Combining \eqref{modp} with \eqref{P2} yields 
\[k\cdot b^n + 1 = k\cdot b^{a_i} \cdot b^{m_i\cdot s}+1 \equiv -1\cdot 1 +1  \equiv 0 \pmod {p_i^{*}}.\]
Hence, $P$ is divisible by the prime $p_i^{*} \neq p_i$. As a result, $k\cdot b^n + 1$ has at least two distinct prime factors, namely $p_i$ and $p_i^{*}$. \par
The reasoning with respect to $Q$ is analogous. From \eqref{modb} we obtain
\begin{equation*}
q_j^{\gamma_j}\cdot Q=k\cdot b^n -1 \geq (1+b^M)\cdot b^1-1 > b^M > q_j^{v_j+N}.
\end{equation*}
Thus, if $\gamma_j < v_j+N$, then $Q>1$ and we are done. On the other hand, if $\gamma_j \geq v_j+N$, then by the same arguments as for $P$, we derive that $Q$ is divisible by the prime $q_j^{*}\neq q_j$. This shows that $k\cdot b^n - 1$ has at least two distinct prime factors, namely $q_j$ and $q_j^{*}$.\par
So far we have shown how for every fixed integer $b\geq3$, to choose such $k$ that both $kb^n+1$ and $kb^n-1$ have at least two distinct prime divisors for every positive integer $n$. For this $k$ to be a $b$-Brier number, we need to ensure that coprimality conditions in Definitions~\ref{sierpn} and~\ref{riesn} hold. In choosing $k$ by the Chinese remainder theorem, besides \eqref{modp}, \eqref{modq}, and \eqref{modb}, we may additionally impose
\[
k\equiv 0 \pmod {b-1}.
\]
This additional condition is justified since all the primes appearing in \eqref{primeslist} are coprime to $b-1$, as the order of $b$ modulo each of them is greater than $1$, while clearly
\[\gcd(b^{M+1},b-1)=1.\]
Consequently
\[
\gcd(k+1,b-1)=\gcd(k-1,b-1)=1.
\]
Together with what was proved above, this shows that $k$ is simultaneously a $b$-Sierpi\'nski number and a $b$-Riesel number, and hence is a $b$-Brier number. Finally, observe that the Chinese remainder theorem gives not just one such integer $k$, but an entire residue class. More precisely, if $k_0>b^M$ is one solution of all the imposed congruences and
\[
L:=(b-1)b^{M+1}
\prod_{i=1}^{C}p_i^{u_i+N}p_i^{*}
\prod_{j=1}^{D}q_j^{v_j+N}q_j^{*},
\]
then every integer
\[
k=k_0+rL,\qquad r\geq0,
\]
satisfies the same congruences. Therefore, the preceding argument applies to every such $k$. Hence
\[
\{k_0+rL:r\geq0\}
\]
is an infinite arithmetic progression of $b$-Brier numbers, where $b\geq3$ is fixed.\par
It remains to prove the theorem for $b=2$. The only obstruction to the preceding argument where we had $b\geq3$ is the modulus $m_2=6$ in $\mathcal{S}_1$, since $2^6-1$ has no primitive prime divisor. For this modulus, we instead associate the prime $p_2=3$. Since neither $\mathcal{S}_1$ nor $\mathcal{S}_2$ contains the modulus $2$, the prime $3$, which is a primitive divisor of $2^2-1$, does not arise from any other modulus, and hence the primes in \eqref{primeslist} remain distinct. Moreover,
\[
\nu_3(2^6-1)=2
\quad\text{and}\quad
\operatorname{ord}_{3^2}(2)=6.
\]
Thus $u_2=2$, and Lemma~\ref{ord} yields
\[
\operatorname{ord}_{3^{2+N}}(2)=6\cdot 3^N.
\]
Therefore, the same order-lifting argument applies to the modulus $m_2=6$, while for all other moduli in $\mathcal{S}_1$ and $\mathcal{S}_2$ the preceding proof remains unchanged. Thus, the theorem also holds for $b=2$.
\end{proof}

{\bf Acknowledgements.} The author would like to thank the anonymous referee for valuable comments and suggestions that substantially improved the clarity and presentation of the manuscript.

\bibliographystyle{plain}
\bibliography{BMBibTeX}

@article {erd50,
    AUTHOR = {Erd\H{o}s, P.},
     TITLE = {On integers of the form {$2^k+p$} and some related problems},
   JOURNAL = {Summa Brasil. Math.},
  FJOURNAL = {Summa Brasiliensis Mathematicae},
    VOLUME = {2},
      YEAR = {1950},
     PAGES = {113--123},
}

@article {b16,
    AUTHOR = {K. Bresz and L. Jones and A. Lamarche and M. Markovich},
     TITLE = {A problem related to a conjecture of {P}olignac},
   JOURNAL = {Integers},
  FJOURNAL = {Integers. Electronic Journal of Combinatorial Number Theory},
    VOLUME = {16},
      YEAR = {2016},
     PAGES = {Paper No. A43, 8 pp.},
}

@article {sierp60,
    AUTHOR = {Sierpi\'nski, W.},
     TITLE = {Sur un probl\`eme concernant les nombres {$k\cdot 2\sp{n}+1$}},
   JOURNAL = {Elem. Math.},
  FJOURNAL = {Elemente der Mathematik. Revue de Math\'ematiques
              \'El\'ementaires. Rivista de Matematica Elementare},
    VOLUME = {15},
      YEAR = {1960},
     PAGES = {73--74},
}

@article {c1,
    AUTHOR = {P. Balister and B. Bollob\'as and R. Morris and J. Sahasrabudhe and M. Tiba},
     TITLE = {Erd{\H{o}}s covering systems},
   JOURNAL = {Acta Math. Hungar.},
  FJOURNAL = {Acta Mathematica Hungarica},
    VOLUME = {161},
      YEAR = {2020},
    NUMBER = {2},
     PAGES = {540--549},
}

@article {c2,
    AUTHOR = {H. Li and B. Wang and C. Wang and S. Yi},
     TITLE = {On {E}rd{\H{o}}s covering systems in global function fields},
   JOURNAL = {J. Number Theory},
  FJOURNAL = {Journal of Number Theory},
    VOLUME = {266},
      YEAR = {2025},
     PAGES = {269--280},
}

@article {c3,
    AUTHOR = {C. Bispels and M. Cohen and J. Harrington and J. Lowrance and K. Pontes and L. Schaumann and T. W. H. Wong},
     TITLE = {A further investigation on covering systems with odd moduli},
   JOURNAL = {Discrete Math.},
  FJOURNAL = {Discrete Mathematics},
    VOLUME = {349},
      YEAR = {2026},
     PAGES = {Paper No. 115013, 14  pp.},
}

@article {gs1,
    AUTHOR = {C. Finch and J. Harrington and L. Jones},
     TITLE = {Nonlinear {S}ierpi\'nski and {R}iesel numbers},
   JOURNAL = {J. Number Theory},
  FJOURNAL = {Journal of Number Theory},
    VOLUME = {133},
      YEAR = {2013},
    NUMBER = {2},
     PAGES = {534--544},
}

@article {gs2,
    AUTHOR = {L. Jones},
     TITLE = {Variations on a theme of {S}ierpi\'nski},
   JOURNAL = {J. Integer Seq.},
  FJOURNAL = {Journal of Integer Sequences},
    VOLUME = {10},
      YEAR = {2007},
    NUMBER = {4},
     PAGES = {Article 07.4.4, 15  pp.},
}

@article {gs3,
    AUTHOR = {M. Filaseta and R. Groth and T. Luckner},
     TITLE = {Generalized {S}ierpi\'nski numbers},
   JOURNAL = {Colloq. Math.},
  FJOURNAL = {Colloquium Mathematicum},
    VOLUME = {174},
      YEAR = {2023},
    NUMBER = {2},
     PAGES = {191--201},
}

@article{riesel56,
  author    = {H. Riesel},
  title     = {N\r{a}gra stora primtal},
  journal   = {Elementa},
  volume    = {39},
  year      = {1956},
  pages     = {258--260},
}

@article {doc09,
    AUTHOR = {C. K. Caldwell and T. Komatsu},
     TITLE = {Powers of {S}ierpi\'nski numbers base {$B$}},
   JOURNAL = {Integers},
  FJOURNAL = {Integers. Electronic Journal of Combinatorial Number Theory},
    VOLUME = {10},
      YEAR = {2010},
        PAGES = {Paper No. A36, 423--436},
}

@article {m1,
    AUTHOR = {Y.-G. Chen},
     TITLE = {On integers of the forms {$k^r-2^n$} and {$k^r2^n+1$}},
   JOURNAL = {J. Number Theory},
  FJOURNAL = {Journal of Number Theory},
    VOLUME = {98},
      YEAR = {2003},
    NUMBER = {2},
     PAGES = {310--319},
}

@article {m3,
    AUTHOR = {Y.-G. Chen},
     TITLE = {On integers of the form {$k2^n+1$}},
   JOURNAL = {Proc. Amer. Math. Soc.},
  FJOURNAL = {Proceedings of the American Mathematical Society},
    VOLUME = {129},
      YEAR = {2001},
    NUMBER = {2},
     PAGES = {355--361},

}

@article {m4,
    AUTHOR = {Y.-G. Chen},
     TITLE = {On integers of the form {$2^k\pm
              p^{\alpha_1}_1p^{\alpha_2}_2\cdots p^{\alpha_r}_r$}},
   JOURNAL = {Proc. Amer. Math. Soc.},
  FJOURNAL = {Proceedings of the American Mathematical Society},
    VOLUME = {128},
      YEAR = {2000},
    NUMBER = {6},
     PAGES = {1613--1616},
}

@article {harr15,
    AUTHOR = {J. Harrington},
     TITLE = {Two questions concerning covering systems},
   JOURNAL = {Int. J. Number Theory},
  FJOURNAL = {International Journal of Number Theory},
    VOLUME = {11},
      YEAR = {2015},
    NUMBER = {6},
     PAGES = {1739--1750},
}

@article {m5,
    AUTHOR = {J.-J. Zhao},
     TITLE = {Integers not of the form {$2^n+p^{\alpha} q^{\beta}$}},
   JOURNAL = {Bull. Korean Math. Soc.},
  FJOURNAL = {Bulletin of the Korean Mathematical Society},
    VOLUME = {61},
      YEAR = {2024},
    NUMBER = {6},
     PAGES = {1631--1637},
}

@article {zsg,
    AUTHOR = {K. Zsigmondy},
     TITLE = {Zur {T}heorie der {P}otenzreste},
   JOURNAL = {Monatsh. Math. Phys.},
  FJOURNAL = {Monatshefte f\"ur Mathematik und Physik},
    VOLUME = {3},
      YEAR = {1892},
    NUMBER = {1},
     PAGES = {265--284},
}

@article {fil08,
    AUTHOR = {M. Filaseta and C. Finch and M. Kozek},
     TITLE = {On powers associated with {S}ierpi\'nski numbers, {R}iesel
              numbers and {P}olignac's conjecture},
   JOURNAL = {J. Number Theory},
  FJOURNAL = {Journal of Number Theory},
    VOLUME = {128},
      YEAR = {2008},
    NUMBER = {7},
     PAGES = {1916--1940},
}

@article {sq1,
    AUTHOR = {A. Armbruster and G. Barger and S. Bykova and
              T. Dvorachek and E. Eckard and J. Harrington and
              Y. Sun and T. W. H. Wong},
     TITLE = {On binomial coefficients associated with {S}ierpi\'nski and
              {R}iesel numbers},
   JOURNAL = {Integers},
  FJOURNAL = {Integers. Electronic Journal of Combinatorial Number Theory},
    VOLUME = {21},
      YEAR = {2021},
     PAGES = {Paper No. A61, 14 pp.},

}

@article {sq2,
    AUTHOR = {D. Baczkowski and J. Eitner},
     TITLE = {Polygonal-{S}ierpi\'nski-{R}iesel sequences with terms having
              at least two distinct prime divisors},
   JOURNAL = {Integers},
  FJOURNAL = {Integers. Electronic Journal of Combinatorial Number Theory},
    VOLUME = {16},
      YEAR = {2016},
     PAGES = {Paper No. A40, 17 pp.},

}

@article {sq3,
    AUTHOR = {F. Luca and V. J. Mej\'ia Huguet},
     TITLE = {Fibonacci-{R}iesel and {F}ibonacci-{S}ierpi\'nski numbers},
   JOURNAL = {Fibonacci Quart.},
  FJOURNAL = {The Fibonacci Quarterly. The Official Journal of the Fibonacci
              Association},
    VOLUME = {46/47},
      YEAR = {2008/09},
    NUMBER = {3},
     PAGES = {216--219},
}

@article {sq4,
    AUTHOR = {D. Ismailescu and Park, P. S.},
     TITLE = {On pairwise intersections of the {F}ibonacci, {S}ierpi\'nski,
              and {R}iesel sequences},
   JOURNAL = {J. Integer Seq.},
  FJOURNAL = {Journal of Integer Sequences},
    VOLUME = {16},
      YEAR = {2013},
    NUMBER = {9},
     PAGES = {Article 13.9.8, 9 pp.},
}

@article {nn,
    AUTHOR = {H. Evans and J. Harrington and K. Heiney and M. Wieczorek},
      title={Generalized {S}ierpi\'{n}ski and {R}iesel numbers of the form $tb^t+\alpha$}, 
      YEAR = {2025},
     PAGES = {13 pp.},
     note = {arXiv:2508.05942}
}

@article {sr26,
    AUTHOR = {C. Bispels and M. Cohen and  J. Harrington and
              J. Lowrance and K. Pontes and L. Schaumann and
              T. W. H. Wong},
     TITLE = {On {S}ierpi\'nski and {R}iesel repdigits and repintegers},
   JOURNAL = {Integers},
  FJOURNAL = {Integers. Electronic Journal of Combinatorial Number Theory},
    VOLUME = {26},
      YEAR = {2026},
     PAGES = {Paper No. A12, 13 pp.},
      ISSN = {1553-1732},
       DOI = {10.5281/zenodo.18154168},
       URL = {https://doi.org/10.5281/zenodo.18154168},
}

@inproceedings{br09,
  author    = {A. Brunner and C. K. Caldwell and D. Krywaruczenko
               and C. Lownsdale},
  title     = {Generalizing {S}ierpi\'{n}ski numbers to base $b$},
  booktitle = {New Aspects of Analytic Number Theory, Proceedings of RIMS (RIMS K{\^o}ky{\^u}roku)},
  number    = {1639},
  pages     = {69--79},
  publisher = {Research Institute for Mathematical Sciences, Kyoto University},
  year      = {2009}
}

@article {cg07,
    AUTHOR = {Y.-G. Chen},
     TITLE = {On integers of the forms {$k\pm 2^n$} and {$k2^n\pm 1$}},
   JOURNAL = {J. Number Theory},
  FJOURNAL = {Journal of Number Theory},
    VOLUME = {125},
      YEAR = {2007},
    NUMBER = {1},
     PAGES = {14--25},
      ISSN = {0022-314X,1096-1658},
   MRCLASS = {11B25 (11A07 11P32)},
MRREVIEWER = {Christian\ Elsholtz},
       DOI = {10.1016/j.jnt.2006.10.005},
       URL = {https://doi.org/10.1016/j.jnt.2006.10.005},
}

\end{document}